\newcommand{\wco}{weak classifying object\@\xspace}
\def\quot{\textsf{Q}}
\def\sub{\textsf{S}}
\def\M{\mathcal{M}}
\def\K{\mathcal{K}}
\def\opp{\text{op}}
\providecommand{\abbrv}[1]{#1.\@\xspace}
\providecommand{\ie}       {\abbrv{i.e}}
\providecommand{\adef}     {Definition\@\xspace}
\providecommand{\athm}     {Theorem\@\xspace}
\def\xto#1{\xrightarrow{#1}}
\def\xot#1{\xleftarrow{#1}}
\newtheoremstyle{reference}
   {}                
   {}                
   {}              
   {}                      
   {\fontseries{b}\selectfont}              
   {:}                     
   {.2em}                  
   {\thmname{#1}           
    \thmnumber{#2}         
    \thmnote{{\sc [#3]}}} 
\def\wk{\textsc{wk}}
\def\cof{\textsc{cof}}
\def\fib{\textsc{fib}}
\theoremstyle{reference}
  \newtheorem{theorem}{Theorem}[section]
  \newtheorem{lemma}[theorem]{Lemma}
  \newtheorem{proposition}[theorem]{Proposition}
  \newtheorem{example}[theorem]{Example}
  \newtheorem{remark}[theorem]{Remark}
  \newtheorem{definition}[theorem]{Definition}
  \newtheorem{corollary}[theorem]{Corollary}
  \newtheorem{notat}[theorem]{Notation}
  \newtheorem*{theorem*}{Theorem}
  \newtheorem*{lemma*}{Lemma}
  \newtheorem*{proposition*}{Proposition}
  \newtheorem*{example*}{Example}
  \newtheorem*{exercise*}{Exercise}
  \newtheorem*{remark*}{Remark}
  \newtheorem*{definition*}{Definition}
  \newtheorem*{corollary*}{Corollary}
  \newtheorem*{notat*}{Notation}
  \newtheorem*{scholium*}{Scholium}
  \newtheorem*{counterex*}{Counterexample}
  \newtheorem*{conjec*}{Conjecture}
  \newtheorem*{quest*}{Question}
\renewcommand{\textbf}[1]{\text{\fontseries{b}\selectfont{\upshape #1}}}
\newcommand{\cate}[1]{\textbf{#1}}
\newcommand{\A}{\cate{A}}
\newcommand{\C}{\cate{C}}
\newcommand{\Top}{\cate{Top}}
\newcommand{\Cat}{\cate{Cat}}
\newcommand{\Gpd}{\cate{Gpd}}
\newcommand{\sSet}{\cate{sSet}}
\newcommand{\Set}{\cate{Set}}
\newcommand{\Sets}{\Set}
\newcommand{\QZ}{\mathbb{Q}/\mathbb{Z}}
\newlength{\seplen}
\newlength{\sepwid}
\def\firstblank{\,\rule{\seplen}{\sepwid}\,}
\def\[{\begin{equation}}
\def\]{\end{equation}}
\newcommand{\W}{\mathcal{W}}
\newcommand{\ho}{\textsc{ho}}
\providecommand{\refbf}[1]{\textbf{\ref{#1}}}
  \def\@cite#1#2{[\textbf{#1}\if@tempswa , #2\fi]}
  \def\@biblabel#1{[\textsf{#1}]}
\def\signed #1{{\leavevmode\unskip\nobreak\hfil\penalty50\hskip2em
  \hbox{}\nobreak\hfil(#1)%
  \parfillskip=0pt \finalhyphendemerits=0 \endgraf}}
\newsavebox\mybox
\newenvironment{aquote}[1]
  {\savebox\mybox{#1}\begin{quotation}}
  {\signed{\usebox\mybox}\end{quotation}}
\newcommand{\pullback}[2]{\node at ($(#1)!.25!(#2)$) {$\lrcorner$};}
\newcommand{\pushout}[2]{\node at ($(#1)!.75!(#2)$) {$\ulcorner$};}
\def\coker{\text{coker}}
\def\@settitle{\begin{center}%
  \baselineskip14\p@\relax
  \bfseries
  \uppercasenonmath\@title
  \@title
  \ifx\@subtitle\@empty\else
     \\[1ex]\uppercasenonmath\@subtitle
     \footnotesize\mdseries\@subtitle
  \fi
  \end{center}%
}
\def\subtitle#1{\gdef\@subtitle{#1}}
\def\@subtitle{}
\begin{document}

\title{Homotopical algebra is not concrete}

\author{Ivan di Liberti}
\author{Fosco Loregian$^\dag$}
\thanks{$^\dag$ The second author is supported by the Grant Agency of the Czech Republic under the grant \textsc{P}201/12/\textsc{G}028.}
\address{
$^\dag$Department of Mathematics and Statistics\newline
Masaryk University, Faculty of Sciences\newline
Kotl\'{a}\v{r}sk\'{a} 2, 611 37 Brno, Czech Republic\newline
\href{mailto:diliberti@math.muni.cz}{\sf diliberti@math.muni.cz}\newline
\href{mailto:loregianf@math.muni.cz}{\sf loregianf@math.muni.cz}
}

\date{\today}
\maketitle 

\begin{abstract}
We generalize Freyd's well-known result that ``homotopy is not concrete'',
offering a general method to show that under certain assumptions on a model
category $\M$, its homotopy category $\ho(\M)$ cannot be concrete. 
This result is part of an attempt to understand more deeply the relation between 
set theory and abstract homotopy theory.
\end{abstract}
\section{Introduction}
\begin{aquote}{\cite{fconc}}
[The homotopy category of spaces $\cate{Ho}$] has always been the best example
of an \emph{abstract} category -- though its objects are spaces, the points of
the spaces are irrelevant because the maps are not functions -- best, because of
all abstract categories it is the one most often lived in by real
mathematicians. \emph{It is satisfying to know that its abstract nature is
permanent, that there is no way of interpreting its objects as some sort of set
and its maps as functions.}
\end{aquote}
As final as it may sound, Freyd's result that ``homotopy is not concrete'', and
in particular the paragraph above, doesn't address the fundamental problem of
\emph{how often} and \emph{why} the homotopy category of a category $\C$ endowed
with a class $\W_\C \subseteq \hom(\C)$ of weak equivalences is not concrete.

One of the strongest motivations in writing the present paper has been to fill
this apparent gap in the literature, clarifying which assumptions on a (model or
relative) category $(\M, \wk)$ give the homotopy category $\ho(\M) =
\M[\wk^{-1}]$ the same permanently abstract nature.

Our main claim here is that indeed Freyd's theorem generalizes quite easily,
and that many model categories that naturally arise in practice 
do not have a concrete localization at weak
equivalences; moreover, in light of our theorem the reason why this happens is
now evident. If we say, somewhat sloppily, that a category $(\M,\wk)$ is
`homotopy\hyp{}concrete' when $\ho(\M)$ is concrete, our result can be
summarized as the statement that very few model categories are
homotopy\hyp{}concrete, and that this somehow happens \emph{as a consequence} of the
fact that they encode an homotopy theory.

It is of course possible, at least in certain cases, to show that a given $\M$
is not homotopy\hyp{}concrete using ad-hoc arguments adapted to the particular
choice of the pair $(\M, \wk)$: in \cite{fconc} Freyd does this for the category
$\Cat$ with its `folk' model structure in which $\wk$ is the class of equivalences
of categories. Apart from being quite involved, Freyd's approach fails
to put the result, and similar others, on the same conceptual ground. Pursuing such a 
structural approach is the main aim of the present work.

Our main theorem, proved at page \pageref{ginnunga}, is
\refbf{ginnunga}:
\begin{theorem*}
Let $\M$ be a pointed model category; if there exist an index $n_0 \in \mathbb{N}_{\ge
1}$ and a `weak classifying object' for the functor $\pi_{n_0} \colon \M \to
\cate{Grp}$ (\adef\refbf{wcodef}), then $\M$ is not homotopy\hyp{}concrete.
\end{theorem*}
Freyd's argument is a completely formal construction relying on nifty, elementary algebraic construction in abelian group theory (Lemma \refbf{spastic}) and on the fact that the category
of spaces ``contains a trace'' of the category of abelian groups, via the
\emph{Moore functors} $M(\firstblank,n) \colon \cate{Ab}\to \Top$. In our generalization to model categories, we rely on similar properties of Eilenberg-Mac Lane-like objects (we call them \emph{weak classifying objects} in \autoref{wcodef}), and their interplay with the looping functor $\Omega$.

We are then able to apply the machinery of \athm\refbf{ginnunga} to several
explicit examples, thus showing that Freyd`s claim that ``homotopy is not
concrete'' remains true in the modern parlance of homotopical algebra. This
suggests how the permanent abstractness of homotopy theory is a reflection of
the permanent abstractness of \emph{homotopical algebra}.

In more detail, as a consequence of \athm\refbf{ginnunga} we offer
\begin{itemize}
	\item a proof that the homotopy category of chain complexes is not homotopy concrete;
	\item a proof that the homotopy category of $\Cat_\text{folk}$\footnote{As
already mentioned, this is a shorthand to refer to the category of small
categories with its `folk' model structure having weak equivalences the
equivalences of categories, and cofibrations the functors injective on objects.}
is not concrete, independent from (and surely more elegant than) the argument
presented in \cite[§4.1]{fconc}; here the result is a corollary of the fact
that the category of groupoids is not homotopy concrete, and this, in turn,
follows from the fact that the category of \emph{1-types} is not homotopy
concrete (these two categories being Quillen equivalent).
	\item A proof that the stable category of spectra $\cate{Sp}$ is not homotopy
concrete. Freyd \cite{Freydconc} observes that the stable category obtained as
Spanier-Whitehead stabilization of \textsc{cw}-complexes of dimension $\ge 3$
can't be concrete; our \refbf{spectra} can be thought as a slight refinement
that makes no assumptions on dimension.
	\item A proof that the local model structure
\cite{jardine1987simplical,dugger2004hypercovers} on the category of simplicial
sheaves on a site is not homotopy concrete.
\end{itemize}
Of course, we do not see these results as unexpected, given the tight relation
between unstable and stable homotopy, between categories and (geometric
realization of) simplicial sets, and between algebraic topology and algebraic
geometry.

We feel this is an additional step towards a deeper understanding of the notion
of concreteness and foundational issues in homotopy theory, and an additional
hint, if needed, for how set theory and homotopy theory do (or do not)
interact.
\section{Generalities on concreteness}
We recall the main definition we will work with (see \cite{Bor1,McL}):
\begin{definition}\label{concrecat} A category $\C$ is called \emph{concrete} if
it admits a faithful functor $U\colon \C \to \Sets$.
\end{definition} Concreteness can be regarded as a smallness request; in fact,
the following remark shows that many of the categories arising in mathematical
practice are concrete simply because they are not big enough, either because
they are small, or because they are accessible (the proof of each of the
following statements is easy).
\begin{remark}[almost everything is concrete]\label{all-is-conc} Every small
category is concrete. Every accessible category is concrete. If a category is
not concrete, none of its small subcategories can be dense. Concreteness is a
self-dual property, \ie $\C$ is concrete if and only if $\C^\opp$ is concrete.
If $\C$ is a concrete, and $J$ is a small category, then the functor category
$\C^J$ is concrete. If $\C$ is monadic over $\Set$, then it is concrete.
\end{remark} 
The paper \cite{Isbell1964} states a condition for the concreteness of a category $\C$, which relies on the notion of a \emph{resolvable relation} between the classes of spans and cospans in $\C$. This was linked by \cite{freyd1973concreteness} to a smallness request on the class of so-called \emph{generalized regular subobjects} of objects in $\C$. In more detail, \cite{freyd1973concreteness} proves the following statements:
\begin{itemize}
	\item the Isbell condition of \cite{Isbell1964} is equivalent, in a category with finite products, to the smallness of the class of \emph{generalized regular subobjects} of each object $A\in\C$, that we define below in \refbf{grsdef}.
	\item In a category with finite limits, the Isbell condition is equivalent to the smallness of the class of regular subobjects of each object $A\in\C$.
	\item The smallness of each class of generalized regular subobjects is necessary for concreteness.
\end{itemize}
\begin{definition}[Regular Generalized Subobject]\label{grsdef} Let $f\in
\K_{/A}$ an object of the slice category, and let $C(f,B)$ be the class of pairs
$u,v : A \to B$ such that $uf=vf$. %$ as morphisms $\text{src}(f) \to A \to B$.
Define an equivalence relation $\asymp$ on objects of $\K_{/A}$ as
\begin{center}
$f\asymp g$ iff $C(f,B)=C(g,B)$ for every $B\in\K$, 
\end{center}
and let $\sub(\K_{/A})$ be the quotient of $\K_{/A}$ under this
equivalence relation. This is called the class of \emph{generalized regular
subobjects} of $A\in\K$.
\begin{remark}
Briefly, the $\asymp$ relation identifies two maps with codomain $A$ if and only if they equalize the same pairs of arrows. In the category of sets and functions, any morphism $f$ satisfies $f\asymp m_f$, where $m_f$ is the monomorphism appearing in the epi-mono factorization of $f$. More generally, the same argument shows that $f\asymp m_f$ in every category endowed with a factorization system with regular monomorphisms as right class. A slightly more general argument shows that in a finitely complete category with cokernel pairs, a morphism $f \colon X\to A$ is $\asymp$-equivalent to the regular monomorphism $q$ appearing in the equalizer
\[\notag
\begin{kodi}
\obj{E & A & |(push)| A\cup_X A \\};
\mor E q:-> A u:{swap,shove=4pt},-> push;
\mor A v:{shove=-4pt},-> push;
\end{kodi}
% \xymatrix{
% 	E \ar[r]^-q& A\ar@<4pt>[r]^-u\ar@<-4pt>[r]_-v & A\cup_XA.\\
% }
\]
In \cite{freyd1973concreteness} it is stated that in a category with finite limits the generalized regular subobjects of $A$ coincide with the regular subobjects of $A$ for each object $A$.
\end{remark}
\end{definition}
\begin{proposition}[Freyd condition]\label{isbell}
If $\K$ is concrete then its class of generalized regular subobjects $\sub(\K_{/A})$ is a set for every $A\in\K$.
\end{proposition}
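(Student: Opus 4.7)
The plan is, using a faithful functor $U\colon \K \to \Set$ provided by concreteness of $\K$, to exhibit for each $A\in\K$ a surjection from a subset of the power set $\mathcal{P}(U(A))$ onto $\sub(\K_{/A})$; since the former is a set, this forces the latter to be one as well. The candidate invariant is the assignment
\[
\Phi\colon \K_{/A} \to \mathcal{P}(U(A)),\qquad f\mapsto \mathrm{Im}(U(f)).
\]

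The key observation to verify is that, for $f\colon X\to A$ and $u,v\colon A\to B$, the equation $uf=vf$ is equivalent to agreement of the functions $U(u)$ and $U(v)$ on the subset $\mathrm{Im}(U(f))\subseteq U(A)$. The forward direction is automatic from functoriality of $U$; the converse is where faithfulness enters, since $U(u)\circ U(f)=U(v)\circ U(f)$ gives $U(uf)=U(vf)$, whence $uf=vf$ by faithfulness. It follows that the class $C(f,B)$ depends on $f$ only through $\mathrm{Im}(U(f))$, so that $\mathrm{Im}(U(f))=\mathrm{Im}(U(g))$ forces $C(f,B)=C(g,B)$ for every $B\in\K$, i.e.\ $f\asymp g$.

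In particular $\Phi(f)=\Phi(g)$ implies $f\asymp g$, so $\Phi$ descends to a well-defined map $\mathrm{Im}(\Phi)\to\sub(\K_{/A})$ which is surjective by construction. Since $\mathrm{Im}(\Phi)\subseteq \mathcal{P}(U(A))$ is a set, $\sub(\K_{/A})$ is too. The only place where one has to tread carefully is in not asking too much of $\Phi$: the assignment $\mathrm{Im}\circ U$ need not be a complete invariant of $\asymp$-classes, and the argument genuinely uses only the one-sided implication that equality of images forces $\asymp$-equivalence. Everything else is elementary size bookkeeping.
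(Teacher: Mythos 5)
Your proof is correct, and it is the standard argument for this direction (the paper itself does not spell out a proof, deferring to Isbell and Freyd). The key lemma — that for $f\colon X\to A$ and $u,v\colon A\to B$, one has $uf=vf$ iff $U(u)$ and $U(v)$ agree on $\mathrm{Im}(U(f))$, with faithfulness supplying the reverse implication — is exactly the content one needs, and it shows cleanly that $C(f,B)$ is a function of $\mathrm{Im}(U(f))$ alone. From there the size bookkeeping is as you describe: the equivalence ``same image under $U$'' refines $\asymp$, so $\sub(\K_{/A})$ receives a surjection from a subclass of $\mathcal{P}(U(A))$ and is therefore a set. Your closing caution is also well placed: $\mathrm{Im}\circ U$ is typically a strictly finer invariant than the $\asymp$-class (think of a non-full $U$), and the argument only needs the one-sided implication. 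The single thing worth making explicit, if you wanted to be scrupulous, is that the factorization of $\K_{/A}\twoheadrightarrow\sub(\K_{/A})$ through $\mathrm{Im}(\Phi)$ requires no choice on the (possibly proper) class $\K_{/A}$: the induced map $\mathrm{Im}(\Phi)\to\sub(\K_{/A})$ is definable outright, sending $\Phi(f)$ to $[f]_\asymp$, with your lemma guaranteeing well-definedness.
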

It is worthwhile to notice that there is a completely dual definition of \emph{generalized regular quotients} $\quot(\K_{A/})$: one similarly defines a relation that identifies two maps with \emph{domain} $A$ if and only if they \emph{coequalize} the same pairs of arrows. The size of equivalence classes of generalized regular quotients characterize concreteness as well:
\begin{proposition}[co-Freyd condition]\label{coisbell}
If $\K$ is concrete then its class of generalized regular quotients $\quot(\K_{A/})$ is a set for every $A\in\K$.
\end{proposition}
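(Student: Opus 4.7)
The plan is to derive Proposition \ref{coisbell} as a formal consequence of Proposition \ref{isbell} by invoking the self\hyp{}duality of concreteness recorded in Remark \ref{all-is-conc}. The key observation is that the construction $\quot(\K_{A/})$, defined on the coslice $\K_{A/}$ via the relation of \emph{coequalizing} the same pairs of arrows, is literally the construction $\sub((\K^\opp)_{/A})$ carried out in the opposite category: one has $\K_{A/} = ((\K^\opp)_{/A})^\opp$, and a pair $u,v \colon A\to B$ is coequalized by $f$ in $\K$ if and only if the dually\hyp{}reversed pair $u^\opp,v^\opp \colon B\to A$ is equalized by $f^\opp$ in $\K^\opp$. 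Consequently the equivalence relations defining $\quot(\K_{A/})$ and $\sub((\K^\opp)_{/A})$ agree, and the resulting quotient classes are in bijection.

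With this identification in hand, the proof reduces to a one\hyp{}line deduction. First I would assume $\K$ is concrete, so that by the self\hyp{}duality of concreteness $\K^\opp$ is concrete as well (a faithful functor $U\colon \K\to \Sets$ yields a faithful functor $U^\opp \colon \K^\opp \to \Sets^\opp$, which one composes with any faithful embedding $\Sets^\opp \hookrightarrow \Sets$; alternatively, one invokes Remark \ref{all-is-conc} directly). Then, applying Proposition \ref{isbell} to $\K^\opp$ at the object $A$, the class $\sub((\K^\opp)_{/A})$ is a set. By the above identification, so is $\quot(\K_{A/})$.

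There is no real obstacle: the argument is entirely formal, and the only content is to check that the equivalence relation $\asymp$ in Definition \ref{grsdef} dualizes verbatim. Should one prefer a proof that does not invoke the self\hyp{}duality of concreteness explicitly, one can repeat the Freyd--Isbell argument of \cite{freyd1973concreteness} in dual form, starting from a purported faithful $U\colon \K\to \Sets$, constructing for each $A\in\K$ an injection from $\quot(\K_{A/})$ into the set of equivalence relations on $UA$ (sending a class $[f\colon A\to X]$ to the kernel pair of $Uf$, which depends only on the pairs coequalized by $f$), and concluding by smallness of the latter.
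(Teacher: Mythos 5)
Your main argument---identifying $\quot(\K_{A/})$ with $\sub((\K^\opp)_{/A})$ and applying the Freyd condition \refbf{isbell} to $\K^\opp$, which is concrete by the self-duality of concreteness noted in \refbf{all-is-conc} (or via the faithful contravariant powerset functor $\Sets^\opp\to\Sets$)---is correct and is exactly how the paper treats this statement, which it presents without separate proof as the formal dual of \refbf{isbell}. One small caveat about your optional direct sketch: the assignment $[f]\mapsto \ker(Uf)$ is not well-defined (two maps out of $A$ can coequalize the same pairs while $Uf$ and $Ug$ have different kernel relations); the correct direction is that, by faithfulness of $U$, the kernel relation of $Uf$ on $UA$ determines which pairs $f$ coequalizes, so one gets a surjection from the set of equivalence relations on $UA$ onto $\quot(\K_{A/})$, which still yields the required smallness.
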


\begin{remark}
Recall that if $\K$ has finite products, the Freyd condition is equivalent to the Isbell condition and thus to concreteness of $\K$. Since all the categories in this paper have finite products there is no real interest in distinguishing the two conditions. Instead of choosing cumbersome notation as Freyd-Isbell condition or similar, we conflate the two conditions, referring to the result, for the sake of brevity, as \emph{the Isbell condition}.
\end{remark}
Several universal constructions of $\Cat$ restrict to constructions on model categories: given the purpose of this work, we are principally interested in those constructions that transport non-concreteness. These includes particularly simple examples: equivalent categories are either both concrete or both non-concrete (so that every category which is Quillen equivalent to a given non-homotopy\hyp{}concrete one is non-homotopy concrete as well), and if $\mathcal{L}\hookrightarrow \K$ is a subcategory and $\mathcal{L}$ is not concrete, so is~$\K$.

We will sometimes exploit such straightforward results to prove that a model category $\M$ is not homotopy\hyp{}concrete. We need only a functor that is \emph{homotopy faithful}, meaning that it induces inclusion \emph{between localizations}; more than often there is no control on which maps $\mathcal{L}(X,Y)\to \M(X,Y)$ become monomorphisms $\ho(\mathcal{L})(X,Y)\to \ho(\M)(X,Y)$, so we need to single out a special case when this happens.
\begin{definition}[piercing model subcategory]
Let $\M$ be a model category; a \emph{piercing model subcategory} is a full subcategory $\W \overset{U}\hookrightarrow \M$, which is reflective and coreflective, and having the model structure for which an arrow $\varphi \colon W \to W'$ is in $\wk,\cof,\fib$ if and only if $U\varphi$ is in $\wk,\cof,\fib$ as an arrow of $\M$.
\end{definition}
\begin{remark}
In the terminology of \cite{may2011more}, a piercing model subcategory is a reflective and coreflective subcategory such that the inclusion $U$ \emph{strongly creates} the model structure on $\W$.
\end{remark}
\begin{definition}
Let $\W \hookrightarrow \M$ be a piercing model subcategory; we say that $\W$ is \emph{homotopy replete} if given a zig zag of weak equivalences in $\M$ from an object of $\W$, 
\[
W \leftrightarrow  \dots \leftrightarrow M
\]
the arrows, as well as the object $M$, lie in $\W$.
\end{definition}
\begin{proposition}
A piercing model subcategory $\W \overset{U}\hookrightarrow \M$ induces a faithful functor $\ho(\W) \overset{\ho(U)}\hookrightarrow \ho(\M)$.
\end{proposition}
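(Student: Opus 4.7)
The plan is to show that the piercing inclusion $U$ sends all the homotopy-theoretic data of $\W$ to the analogous data in $\M$, and then to exploit fullness of $\W$ in $\M$ to match the two homotopy congruences on hom-sets between cofibrant-fibrant representatives.

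Since $\W$ is both reflective and coreflective in $\M$, the inclusion $U$ preserves all small limits and colimits; combined with the fact that $U$ preserves and reflects $\wk$, $\cof$, and $\fib$, this means every (co)fibrant replacement, cylinder object, and path object built inside $\W$ is carried by $U$ to data of the same type in $\M$. In particular, fixing $X, Y \in \W$, a cofibrant replacement $QX \to X$ and a fibrant replacement $Y \to PY$ chosen in $\W$ remain replacements in $\M$, so both $\ho(\W)(X, Y)$ and $\ho(\M)(UX, UY)$ are computed as the quotient by homotopy of the hom-set from $QX$ (resp.\ $UQX$) into $PY$ (resp.\ $UPY$). Fullness of $\W \hookrightarrow \M$ then gives the bijection $\W(QX,PY) \cong \M(UQX, UPY)$ of underlying hom-sets.

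The problem therefore reduces to matching the two homotopy congruences. One direction is automatic: a left homotopy in $\W$ through a cylinder $\mathrm{Cyl}(QX)$ descends via $U$ to a left homotopy in $\M$ through $U\mathrm{Cyl}(QX)$. For the converse, fix a path object $PY \to P' \to PY \times PY$ inside $\W$ (still a path object in $\M$ by the previous paragraph). Given $f, g \colon QX \to PY$ in $\W$ with $Uf$ and $Ug$ homotopic in $\M$, since $UQX$ is cofibrant and $UPY$ fibrant the homotopy can be realized as a right homotopy $H \colon UQX \to UP'$ through this chosen path object; by fullness $H$ lifts uniquely to $\widetilde H \colon QX \to P'$ in $\W$, and faithfulness of $U$ ensures that $\widetilde H$ composed with the two projections returns $f$ and $g$. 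Hence $f$ and $g$ are right-homotopic in $\W$, and because $QX$ is cofibrant and $PY$ fibrant, all notions of homotopy coincide on both sides.

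The only genuinely delicate step, and the one I would verify most carefully, is the claim that strong creation by $U$ really does promote cylinders, path objects, and (co)fibrant replacements from $\W$ to $\M$; once this is in hand, the rest of the argument is forced by the fully faithful embedding.
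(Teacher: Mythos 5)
Your proof is correct and follows essentially the same route as the paper's: both compute $\ho(\W)(X,Y)$ and $\ho(\M)(UX,UY)$ as homotopy-quotients of the hom-set between (co)fibrant replacements taken inside $\W$, and then compare the two resulting congruences to deduce injectivity of the induced map. You are rather more explicit than the paper, which compresses the comparison of the homotopy relations --- the step you handle via a path object chosen inside $\W$, preservation of that path object by $U$, and fullness to lift the right homotopy back into $\W$ --- into a one-line appeal to the ``first isomorphism theorem for sets''; your elaboration of that step is where the real work lies, and it is exactly what is needed to make the quotient map injective rather than merely well-defined.
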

\begin{proof}
Since $\W$ is piercing in $\M$, there is a commutative square
\[
\begin{kodi}
\obj{|(A)| \W(\tilde V,\hat W)& [6.5em] |(B)| \M(U\tilde V,U\hat W) \\ |(D)| \ho(\W)(\tilde V,\hat W) & |(C)| \ho(\M)(U\tilde V,U\hat W)\\};
\mor A -> B ->> C;
\mor[swap] * ->> D {\overline{U}_{VW}}:dashed,-> *;
\end{kodi}
\]
where the horizonal arrows are the actions of the functors $U,\ho(U)$ on hom-sets. The first isomorphism theorem for sets now yields that $\overline{U}_{VW}$ is injective.
\end{proof}
\begin{example}\label{gpd-in-cat}
The inclusion $\Gpd_\text{folk}\hookrightarrow \Cat_\text{folk}$ turns $\Gpd_\text{folk}$ into a piercing model subcategory.
\end{example}
\begin{remark}\label{unconcrete-frombelow}
As a consequence of this result, if a piercing model subcategory $\W \hookrightarrow \M$ is not homotopy concrete, then neither is $\M$.
\end{remark}

\section{$\cate{Ho}$ is not concrete}
The group of remarks in \refbf{all-is-conc} suggests that ``every'' category arising in mathematical practice should be concrete. And yet, in his \cite{fconc} Peter Freyd was able to offer a nontrivial example of a non-concrete category, consisting of topological spaces and homotopy classes of continuous functions.

Freyd's proof is based on several technical lemmas and it is in fact the result of an extremely clever manipulation of basic constructions on topological spaces. We now provide a short but detailed survey of his original idea.
Our aim in this section is to refurbish the classical proof of the theorem contained in \cite{fconc} that, spelled out in modern terms, asserts the following:
\begin{theorem}[Homotopy is not concrete]\label{honoconc}
Let $\wk$ denote the class of \emph{homotopy equivalences} in the category $\Top$ of topological spaces. Then the Gabriel-Zisman localization \cite{GZ} $\cate{Ho}=\Top[\wk^{-1}]$ is not concrete in the sense of \adef\refbf{concrecat}.
\end{theorem}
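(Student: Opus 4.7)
I would argue by contradiction. Assume $\cate{Ho} = \Top[\wk^{-1}]$ is concrete; then by the co-Isbell condition (Proposition~\refbf{coisbell}), for every $A \in \cate{Ho}$ the class $\quot(\cate{Ho}_{A/})$ of generalized regular quotients of $A$ must be a set. The plan is to pick a suitable $A$ and exhibit a proper class of pairwise $\asymp$-inequivalent morphisms out of $A$, contradicting this bound.

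The first step is entirely algebraic and corresponds to the ``spastic'' lemma alluded to in the introduction. I would establish that the analogous size condition already fails in the category $\cate{Ab}$ of abelian groups: for some fixed base $G_0$ one can exhibit a proper class of morphisms $\{\varphi_\alpha \colon G_0 \to H_\alpha\}_\alpha$ that are pairwise $\asymp$-inequivalent. The intuition is to index the construction by a proper class of cardinals, engineering each codomain $H_\alpha$ so that its subgroup/quotient structure is rich enough to be detected by coequalization of a distinguished pair of maps out of $H_\alpha$ that does not coequalize out of $H_\beta$ for $\beta \ne \alpha$. This uses only very elementary features of $\cate{Ab}$ (the richness of the subgroup lattice of large free abelian groups), but must be carried out explicitly enough to guarantee pairwise inequivalence.

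The second step transports the pathology to $\Top$ through the Moore functors $M(\firstblank, n)\colon \cate{Ab} \to \Top$ for some fixed $n \ge 2$. Setting $A := M(G_0, n)$ and $f_\alpha := M(\varphi_\alpha, n) \colon A \to M(H_\alpha, n)$, I would verify that $\varphi_\alpha \not\asymp \varphi_\beta$ forces $f_\alpha \not\asymp f_\beta$ in $\cate{Ho}$. The leverage is the natural identification $[M(H, n), K(H', n)] \cong \hom_{\cate{Ab}}(H, H')$ provided by Hurewicz and the universal property of Eilenberg--Mac Lane spaces, so that any pair of abelian-group maps witnessing $\varphi_\alpha \not\asymp \varphi_\beta$ lifts, after applying $M(\firstblank, n)$ and post-composing into a suitable $K(H', n)$, to a pair of homotopy classes witnessing $f_\alpha \not\asymp f_\beta$.

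The main obstacle is precisely this last verification: the relation $\asymp$ is defined by testing against \emph{all} targets in $\cate{Ho}$, so I need a cofinality argument showing that inequivalence detected on the restricted class of Eilenberg--Mac Lane targets already implies inequivalence in $\cate{Ho}$. This is a connectivity/obstruction-theoretic computation that works as soon as $n$ is chosen large enough for $M(G_0, n)$ and $M(H_\alpha, n)$ to be $(n-1)$-connected. Once this is in place, the construction yields a proper class $\{[f_\alpha]\}_\alpha$ of distinct elements of $\quot(\cate{Ho}_{A/})$, contradicting Proposition~\refbf{coisbell} and proving that $\cate{Ho}$ is not concrete.
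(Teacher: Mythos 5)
There is a genuine gap in your proposal, and in fact a step that cannot be carried out. In your first step you claim ``the analogous size condition already fails in the category $\cate{Ab}$'': you want a proper class of pairwise $\asymp$-inequivalent morphisms $\{\varphi_\alpha \colon G_0 \to H_\alpha\}_\alpha$ in $\cate{Ab}$ itself. This is impossible. The category $\cate{Ab}$ is monadic over $\Set$, hence concrete, hence (by the very same Proposition~\refbf{coisbell} you invoke) the class $\quot(\cate{Ab}_{G_0/})$ is a \emph{set} for every $G_0$. No proper class of $\asymp$-inequivalent morphisms out of any $G_0$ exists in $\cate{Ab}$, and your step 2 --- transporting $\varphi_\alpha \not\asymp \varphi_\beta$ along $M(\firstblank,n)$ --- is therefore vacuous, since its premise is never satisfied. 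Worse, your identification $[M(H,n),K(H',n)]\cong\hom(H,H')$ would make the transfer argument circular even if the premise held: testing $\asymp$-equivalence of the $f_\alpha$ against Eilenberg--Mac\,Lane targets reduces precisely to testing $\asymp$-equivalence of the $\varphi_\alpha$ in $\cate{Ab}$, so you'd recover nothing new.

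The actual black-box lemma (Lemma~\refbf{spastic}) asserts something weaker and subtler: there is a proper class of $p$-torsion abelian groups $B_\alpha$ each carrying a distinguished element $x_\alpha$ of order $p$, such that for $\alpha < \beta$ every group homomorphism $B_\alpha \to B_\beta$ kills $x_\alpha$. This is a \emph{rigidity} property of the groups, not an $\asymp$-inequivalence; the distinguished maps $t_\alpha\colon\mathbb{Z}/p\mathbb{Z}\to B_\alpha$ hitting $x_\alpha$ are all monomorphisms and are in fact pairwise $\asymp$-equivalent in $\cate{Ab}$. The proper class of $\asymp$-inequivalent arrows appears only in $\cate{Ho}$, and it is created by a mechanism entirely absent from your proposal: one passes to Moore spaces, forms the cofiber sequence $M\xto{u_\alpha} M_\alpha \to C_\alpha \xto{v_\alpha} \Sigma M$, and shows that the $v_\alpha\colon C_\alpha\to\Sigma M$ form a proper class of distinct generalized regular \emph{subobjects} of $\Sigma M$. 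The pairwise inequivalence is detected by noticing that $\Sigma u_\alpha \circ v_\alpha$ is null-homotopic while $\Sigma u_\alpha \circ v_\beta$ is not for $\alpha<\beta$ (Proposition~\refbf{key}), and the latter non-vanishing is where the rigidity of the $B_\alpha$ finally enters, via $H_{n+1}(\firstblank,\mathbb{Z})$. In short: the cofiber (or, dually, fiber) sequence is not an optional detour but the engine of the whole argument, and without it --- and with an algebraic first step that is literally false --- the proposal does not go through. (A minor point: you use the co-Isbell condition and generalized regular quotients where the paper's proof of this theorem uses the Isbell condition and subobjects; that substitution is harmless and is indeed what the paper does later in the model-categorical generalization, but it is not the issue here.)
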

\begin{remark}
Freyd's proof seems to leave us free to choose \emph{any} model category structure on $\Top$ that has homotopy equivalences as weak equivalences (and, in particular, \cite{fconc} makes no mention of the classes of cofibrations), and even though such a model structure seems to exist \cite{strom1972homotopy}, the details of this proof are subject of debate (there's an instructive discussion on the $n$Lab page \cite{nlabstrommodel}). Therefore, we decide to restrict our attention to the subcategory of $\Top$ whose objects are \emph{compactly generated spaces}, where a more modern technology is available. We still denote this subcategory as $\Top$.
\end{remark}
\subsection{$\cate{Ho}$ is not concrete: the proof}
Freyd's strategy can be summarized in the following two points:
\begin{itemize}
	\item As stated above \refbf{grsdef}, concreteness for a category $\A$ is equivalent to \emph{Isbell condition}. This necessary and sufficient condition was first proved in \cite{Isbell1964} (necessity) and \cite{fconc} (sufficiency).
	\item In the homotopy category of spaces $\cate{Ho}$ it is possible to find an object (in fact, many) admitting a proper class of generalized regular subobjects.
\end{itemize}
For the remainder of the proof, we fix:
\begin{enumerate}
	\item An integer $n\ge 1$;
	\item an arbitrary prime $p$.
\end{enumerate}
To build an object with a proper class of generalized regular subobjects we manipulate the cofibration sequence of a suitable Moore space. The main tool here is a technical lemma that generates a proper class of groups having arbitrarily large height (see \cite{fuchs2015abelian}). 
\begin{lemma}[black box lemma]\label{spastic}
There exists a sequence $B_\bullet = (B_\alpha)$ of $p$-torsion abelian groups, one for each ordinal number $\alpha\in\cate{Ord}$, satisfying the following conditions:
\begin{itemize}
	\item each $B_\alpha$ contains an element $x_\alpha$ such that $p x_\alpha = 0$;
	\item when $\alpha < \beta$ every homomorphism of groups $f_{\alpha\beta} \colon B_\alpha \to B_\beta$ such that $f(px)=p f(x)$ sends $x_\alpha$ to zero.
\end{itemize}
\end{lemma}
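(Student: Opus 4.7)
The strategy is to reduce the lemma to a classical statement about transfinite \emph{Ulm heights} in abelian $p$-groups, which the authors appear to treat as a genuine black box imported wholesale from \cite{fuchs2015abelian}.

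First, I would recall the needed machinery. For an abelian $p$-group $G$, define the transfinite filtration $p^0 G = G$, $p^{\alpha+1} G = p \cdot p^\alpha G$, and $p^\lambda G = \bigcap_{\alpha < \lambda} p^\alpha G$ for limit ordinals. The \emph{Ulm height} of a nonzero $x \in G$ is the largest ordinal $\alpha$ with $x \in p^\alpha G$, possibly $\infty$. A straightforward transfinite induction shows that every group homomorphism $f \colon G \to H$ satisfies $f(p^\alpha G) \subseteq p^\alpha H$: the successor case is additivity, and the limit case is the trivial inclusion $f\bigl(\bigcap_{\gamma < \lambda} p^\gamma G\bigr) \subseteq \bigcap_{\gamma < \lambda} p^\gamma H$. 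Consequently, $f$ cannot decrease the Ulm height of any element it does not annihilate; note also that the auxiliary condition ``$f(px)=pf(x)$'' in the lemma is automatic for group homomorphisms, but would become a nontrivial compatibility constraint if one ever wished to apply the same construction to maps between pointed sets or similar non-linear contexts.

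Second, I would invoke the Ulm--Zippin--Kaplansky existence theorems for reduced $p$-groups with prescribed Ulm invariants (see \cite[Chapter XII]{fuchs2015abelian}). For each ordinal $\alpha$, these produce a reduced $p$-group $B_\alpha$ of Ulm length just above $\alpha$, containing a distinguished $p$-torsion element $x_\alpha$ of Ulm height exactly $\alpha$, and whose full profile of Ulm invariants is chosen so that, for $\beta > \alpha$, the Ulm invariants of $B_\beta$ at and above height $\alpha$ are incompatible with the divisibility witnesses ($y_\alpha^{(\gamma)}$ with $p^\gamma y_\alpha^{(\gamma)} = x_\alpha$, $\gamma < \alpha$) that realise $x_\alpha$'s height in $B_\alpha$. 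Given $\alpha < \beta$ and $f \colon B_\alpha \to B_\beta$, the image $f(x_\alpha)$ is a $p$-torsion element of Ulm height $\geq \alpha$ in $B_\beta$, and the rigidity of the prescribed Ulm profile forces this image to be zero.

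The main obstacle is the transfinite combinatorial bookkeeping hidden inside the ``choice of profile'' above: a naive prescription (e.g.\ setting all Ulm invariants of $B_\beta$ below height $\beta$ to zero) still leaves $x_\beta$ itself as a potential nonzero target for $f(x_\alpha)$, and ruling this out requires the delicate matching of divisibility witnesses. Rather than re-derive this from scratch, I would cite \cite[Chapter XII]{fuchs2015abelian} verbatim, treating this structural result as the eponymous ``black box'' and directing the interested reader to Fuchs's treatise for the construction of the generalized Prüfer groups that realise it.
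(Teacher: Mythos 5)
There is a genuine gap, and it sits exactly where you flag your ``main obstacle''. The step you defer to Fuchs --- ``the rigidity of the prescribed Ulm profile forces this image to be zero'' --- is not something the Ulm--Zippin--Kaplansky existence theorems provide: they construct (totally projective) $p$-groups with prescribed Ulm invariants, but prescribing Ulm invariants puts no bound on homomorphisms, and in the direction you argue (a map \emph{into} the longer group $B_\beta$) the height obstruction is vacuous, since $B_\beta$ has plenty of socle elements of height $\ge\alpha$. Worse, for the canonical realizations that Fuchs's chapter actually produces (the generalized Pr\"ufer groups $H_{\lambda}$), non-killing homomorphisms in that direction genuinely exist: already for $\alpha=\omega<\beta=\omega+1$ one can define, on the standard generators $x,y_1,y_2,\dots$ of $H_{\omega+1}$ (with $p^ny_n=x$, $px=0$), a homomorphism $H_{\omega+1}\to H_{\omega+2}$ sending $x$ to the nonzero socle element of $p^{\omega}H_{\omega+2}\cong\mathbb{Z}/p^2\mathbb{Z}$, because that element is divisible by every $p^n$. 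So your proposed reduction does not prove the lemma as stated, and no choice of Ulm invariants will rescue it by this mechanism.

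The resolution is that the statement as printed has the arrow backwards relative to what is actually needed and used later in the paper: in Proposition \ref{key} and Lemma \ref{peterkey} the contradiction comes from a homomorphism $B_\beta\to B_\alpha$ (with $\alpha<\beta$) that must kill $x_\beta$. For \emph{that} version your height argument closes in two lines: take $B_\alpha=H_{\alpha+1}$, the generalized Pr\"ufer group of length $\alpha+1$, and let $x_\alpha$ be a nonzero element of order $p$ and height exactly $\alpha$; since homomorphisms do not decrease heights, any $f\colon B_\beta\to B_\alpha$ sends $x_\beta\in p^{\beta}B_\beta$ into $p^{\beta}B_\alpha=0$, because $\beta\ge\alpha+1$ is at least the length of $B_\alpha$. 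This is precisely the heights argument the paper points to: the authors give no proof at all, adopting the lemma as a black box and citing \cite{Freydconc} (with \cite{fuchs2015abelian} for the theory of heights), so your instinct to quote the literature matches their treatment; but if you do sketch the mechanism, it must be run in the decreasing direction. Your side remark that the clause $f(px)=pf(x)$ is automatic for group homomorphisms is correct --- it is a relic of Freyd's formulation for functions that are not assumed additive --- and harmless.
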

We adopt this statement without further explanation (hence the name \emph{black box}): the interested reader will find a proof, based on the theory of \emph{heights} of torsion abelian groups, in \cite{Freydconc}.
\begin{notat}
For each ordinal $\alpha$, let now $M_\alpha$ be the Moore space $M(B_\alpha,n)$ on the group $B_\alpha$  in grade $n$ that we found inside the black box of Lemma \refbf{spastic}. Let $t_\alpha\colon \mathbb{Z}/p\mathbb{Z} \to B_\alpha$ be a group morphism having $x_\alpha$ in its image, and $u_\alpha\colon M \to M_\alpha$ the induced map $M(t_\alpha,n)$ between Moore spaces. We denote by $M$ the Moore space for $\mathbb{Z}/p\mathbb{Z}$ in degree $n$.
\end{notat}
\begin{remark}
Notice that in the canonical cofiber sequence
\[
M \xto{u_\alpha} M_\alpha \to C_\alpha \to \Sigma M \to \Sigma M_\alpha\to\dots
\]
the space $C_\alpha$ is a Moore space for $\coker(t_\alpha)$, and $\Sigma M_\alpha$ is a Moore space for $B_\alpha$. This is a key point in the proof.
\end{remark}
Now we claim that 
\[
\{C_{\alpha} \to \Sigma M \mid \alpha \in \cate{Ord}\}
\] is a proper class of generalized regular subobjects for $\Sigma M$. In order to prove this claim we need the following
\begin{proposition}
\label{key}
For each pair of ordinals $\alpha < \beta$ the composition
\[
C_\beta \xto{v_\beta} \Sigma M \xto{\Sigma u_{\alpha}} \Sigma M_\alpha
\]
is not null-homotopic.
\end{proposition}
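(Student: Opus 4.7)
The plan is to convert the non-null-homotopy of the composition into an algebraic non-existence that is forbidden by Lemma~\refbf{spastic}. Applying the contravariant functor $[-,\Sigma M_\alpha]$ to the cofiber sequence
\[
M\xto{u_\beta}M_\beta\to C_\beta\xto{v_\beta}\Sigma M\xto{\Sigma u_\beta}\Sigma M_\beta
\]
produces an exact sequence of pointed sets
\[
[\Sigma M_\beta,\Sigma M_\alpha]\xto{(\Sigma u_\beta)^{*}}[\Sigma M,\Sigma M_\alpha]\xto{v_\beta^{*}}[C_\beta,\Sigma M_\alpha].
\]
By exactness, $\Sigma u_\alpha\circ v_\beta=v_\beta^{*}(\Sigma u_\alpha)$ vanishes if and only if $\Sigma u_\alpha$ lies in the image of $(\Sigma u_\beta)^{*}$, that is, if and only if there exists $g\colon\Sigma M_\beta\to\Sigma M_\alpha$ satisfying $g\circ\Sigma u_\beta\simeq\Sigma u_\alpha$. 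The whole argument then reduces to ruling out such a lift $g$.

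To rule it out, I would pass to homology. The suspensions $\Sigma M_\alpha=M(B_\alpha,n+1)$ and $\Sigma M_\beta=M(B_\beta,n+1)$ are simply connected Moore spaces (since $n+1\geq 2$) with reduced homology concentrated in degree $n+1$; any such $g$ therefore determines a group homomorphism $g_{*}\colon B_\beta\to B_\alpha$ via $H_{n+1}$, and the hypothetical homotopy $g\circ\Sigma u_\beta\simeq\Sigma u_\alpha$ reads, on the level of $H_{n+1}$, as the equation $g_{*}\circ t_\beta=t_\alpha$; in particular $g_{*}(x_\beta)=x_\alpha\neq 0$. But this is precisely the kind of homomorphism that the Black Box Lemma~\refbf{spastic} prohibits for the pair $\alpha<\beta$, giving the required contradiction.

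The main subtlety lies in the passage from the homotopy equation to the algebraic one in the paragraph above: one must know that the homotopy class of $g$ is tracked faithfully enough on $H_{n+1}$ to detect $g_{*}\circ t_\beta=t_\alpha$. For simply connected Moore spaces of the same degree this is the content of the natural surjection $[M(G,k),M(H,k)]\twoheadrightarrow\operatorname{Hom}(G,H)$ induced by $H_{k}$, combined with the functoriality of ordinary homology. Once this bookkeeping is in hand, the remainder of the argument is the Puppe sequence followed by the Black Box, and no further topological input is required.
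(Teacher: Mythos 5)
Your proposal is correct and follows essentially the same route as the paper: your Puppe-sequence exactness step is exactly the paper's identification $\Sigma M_\beta \simeq \mathrm{cone}(v_\beta)$, which yields the lift $g$ with $g\circ\Sigma u_\beta \simeq \Sigma u_\alpha$, and then both arguments apply $H_{n+1}(\firstblank,\mathbb{Z})$ to the resulting triangle and contradict Lemma \refbf{spastic} via $g_*(x_\beta)\neq 0$. The only difference is cosmetic (explicit exact-sequence bookkeeping versus the cone factorization stated directly), so no further comment is needed.
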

\begin{proof}
We argue by contradiction: assume that the composition is homotopic to a constant map. Since $\Sigma M_\beta \simeq \text{cone}(v_\beta)$, we get a map $\Sigma M_\beta \to \Sigma M_\alpha$ that makes the left triangle below commute.
\[\label{the-diag}
\begin{tikzcd}
\Sigma M \ar[r]\ar[dr]& \Sigma M_\beta \ar[dotted, d]\\
& \Sigma M_\alpha
\end{tikzcd}
\qquad\qquad
\begin{tikzcd}
\mathbb{Z}/p\mathbb{Z} \ar[r]\ar[dr]& B_\beta \ar[dotted, d]\\
& B_\alpha
\end{tikzcd}
\]
But then applying the functor $H_{n+1}(\firstblank, \mathbb{Z})$ to this commutative triangle we get a contradiction on the right diagram of abelian groups whose solid arrows contain $x_\beta$ in their images, and yet the dotted arrow is the zero map on $x_\beta$.
\end{proof}
\begin{remark}
This is one of the most important remarks in the section. Until now our proof lived in $\Top$, but drawing diagram (\refbf{the-diag}), and in particular the arrow $\Sigma M_\beta \to \Sigma M_\alpha$, we have to move in the localization $\cate{Ho}$, as this arrow only exists there: in fact, there might be no map whatsoever between these two objects filling the triangle above, but only a zig-zag of continuous maps
\[
\Sigma M_\beta \xot{\simeq} \bullet \to \bullet \xot{\simeq} \bullet \dots \to \Sigma M_\alpha
\]
\end{remark}
Finally, we can conclude the proof.

\begin{proposition}
All the arrows $v_\alpha \colon C_\alpha \to \Sigma M$ form \emph{distinct} generalized regular subobjects of $\Sigma M$, so that $\sub(\cate{Ho}_{/\Sigma M})$ contains a proper class.
\end{proposition}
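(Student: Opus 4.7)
The plan is to show pairwise that $v_\alpha \not\asymp v_\beta$ whenever $\alpha \neq \beta$; by symmetry of $\asymp$ it suffices to treat $\alpha < \beta$. Since the equivalence classes are then indexed injectively by ordinals, $\sub(\cate{Ho}_{/\Sigma M})$ will contain a proper class, as claimed.

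To separate two different $v_\alpha$ and $v_\beta$ in the sense of $\asymp$, I must exhibit a test object $B$ and a pair of arrows $f,g\colon \Sigma M \to B$ that belong to $C(v_\alpha, B)$ but not to $C(v_\beta, B)$ (or vice versa). The natural candidates, and the whole reason for setting up the cofiber sequences in the previous paragraphs, are $B = \Sigma M_\alpha$, together with $f = \Sigma u_\alpha$ and $g = 0$ (the latter makes sense because $\cate{Ho}$ is pointed). Indeed $f\circ v_\alpha$ and $g\circ v_\alpha$ both vanish: for $g$ this is tautological, and for $f$ it is the standard fact that two consecutive maps of a cofiber sequence compose to the null-homotopic map, so $\Sigma u_\alpha \circ v_\alpha = 0$ in $\cate{Ho}$. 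Hence $(f,g)\in C(v_\alpha, \Sigma M_\alpha)$.

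On the other hand, $g\circ v_\beta = 0$ tautologically, while $f\circ v_\beta = \Sigma u_\alpha \circ v_\beta$ is \emph{not} null-homotopic: this is precisely the content of \refbf{key}. Therefore $(f,g)\notin C(v_\beta,\Sigma M_\alpha)$, which witnesses $C(v_\alpha,\Sigma M_\alpha)\neq C(v_\beta,\Sigma M_\alpha)$ and hence $v_\alpha \not\asymp v_\beta$.

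There is really no hard step left once Proposition \refbf{key} is in place; the only delicate point is conceptual, namely to remember that the relation $\asymp$ is tested against \emph{all} parallel pairs, so producing a single separating pair $(f,g)$ in a single codomain $B$ is enough. The use of the zero morphism is the trick that converts the non-nullness of $\Sigma u_\alpha \circ v_\beta$, which is the genuine homotopical input, into a statement about two maps failing to be coequalized by $v_\beta$. Combined with \refbf{isbell}, this produces the announced obstruction to concreteness of $\cate{Ho}$.
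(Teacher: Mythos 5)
Your proof is correct and takes essentially the same approach as the paper: both use the test pair $(\Sigma u_\alpha, 0)$ in $\cate{Ho}(\Sigma M, \Sigma M_\alpha)$ together with Proposition~\refbf{key} and the vanishing of consecutive composites in a cofiber sequence to separate $v_\alpha$ from $v_\beta$ under $\asymp$. The paper presents it as a proof by contradiction (assume $v_\alpha\asymp v_\beta$, deduce $\Sigma u_\alpha\circ v_\beta$ is null, contradiction), whereas you argue directly by exhibiting the separating pair; this is only a cosmetic difference.
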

\begin{proof}
Suppose $v_\alpha \asymp v_\beta$ for $\alpha < \beta$. Since in the following diagram
\[
\begin{kodi}
\obj{
|(Ca)| C_\beta & &[3em] \\[-2em]
&|(SM)| \Sigma M &|(SMb)| \Sigma M_\alpha\\[-2em]
|(Cb)| C_\beta & & \\
};
\mor SM {\Sigma u_\alpha}:-> SMb;
\mor Ca v_\beta:-> +;
\mor[swap] Cb v_\alpha:-> SM;
\end{kodi}
\]
the composition of $\Sigma u_\alpha \circ v_\alpha$ is null-homotopic (we assumed that $v_\alpha$ and $v_\beta$ equalize the same arrows, hence they both equalize the pair $(0,\Sigma u_\alpha)$), also the composition $\Sigma u_\alpha \circ v_\beta$ is null-homotopic. This contradicts lemma \refbf{key}.
\end{proof}
This concludes Freyd's original proof, and paves the way to a certain number of questions and generalization.
\section{A criterion for unconcreteness}
The proof of \athm\refbf{honoconc} relies on
\begin{itemize}
\item the existence of Moore spaces;
\item the existence of the homology functors;
\item their interplay with the suspension functor $\Sigma : \Top \to \Top$.
\end{itemize}
Now, not every model category has a notion of homology, but in pointed model categories we can define \emph{homotopy groups} using the suspension-loop adjunction. Adapting Freyd's proof to this dual situation constitutes the original result of the present work. The discussion so far has been tailored to let the proof of our main theorem seem natural.
\subsection{Homotopy groups on model categories}
Homotopy groups with coefficients can be constructed in every pointed model category; for the record, we simply adapt the construction that \cite[§II.6]{Baues1989} performs in the more general setting of \emph{cofibration categories}.
\begin{definition}[$\Sigma\dashv \Omega$ adjunction, homotopy groups]\label{sigmomega}
Let $\M$ be a pointed model category,  one can define the \emph{suspension-loop adjunction} via the following diagrams that are, respectively, an homotopy pushout and an homotopy pullback in $\M$
\[
\begin{kodi}
\obj{
%	|(M)|\ho(\M) &[2em] |(pM)| \ho(\M) &[3em] |(pM')| \ho(\M) &[2em] |(M')| \ho(\M)\\[-2em]
	X &|(1a)| 0 & \Omega Y &|(1b)| 0\\
	|(1c)| 0 & \Sigma X &|(1d)| 0 & Y \\
};
\mor X -> 1a -> {Sigma X};
\mor * -> 1c -> *;
\mor {Omega Y} -> 1b -> Y;
\mor * -> 1d -> *;
%\mor M \Sigma:-> pM; \mor pM' \Omega:-> M';
\pushout{X}{Sigma X}
\pullback{Omega Y}{Y}
\end{kodi}
\]
This defines two functors that we denote $\Sigma : \ho(\M) \to \ho(\M)$ and $\Omega : \ho(\M) \to \ho(\M)$. It is easy to notice that $(\Sigma, \Omega)$ is an adjoint pair, since arrows $A\to \Omega B$ correspond bijectively to arrows $\Sigma A \to B$.
So we define the \emph{$n^\text{th}$ homotopy group of $X$ with coefficients in $A$} to be
\[
\pi^A_n(X) :=  \ho(\M)(A, \Omega^n X).
\]
\end{definition}
\begin{remark}\label{ocio-coef}
This definition is of course compatible with shifting, in the sense that, as a consequence of the adjunction $\Sigma\dashv \Omega$, we have a natural isomorphism 
$$\pi^A_n \circ \Omega  = \pi^A_{n+1}.$$ 
In our discussion coefficients will be hidden for notational simplicity. Of course this is a group when $n \geq 1$, abelian when $n\geq 2$.
\end{remark}

\subsection{The main theorem}
\begin{remark}
The idea of our theorem is that if we can realize a \emph{weak classifying object} $k(G)$, at least for each abelian group $B_\alpha$ in the black box lemma, then we can define a suitable object $K$ having a proper class of generalized regular quotients.

As already mentioned, an exact translation of Freyd's argument is impossible due to the lack of a `universal' homology theory on a general model category, and yet the main result is preserved with only a few adjustments:%. The main \emph{idea} is, indeed, absolutely unchanged in this translation procedure. We briefly outline the reasons why we are constrained to these adjustments:
\begin{itemize}
	\item First of all we must switch to generalized \emph{quotients}, since we work with homotopy groups, and this consequently forces us to play with \emph{looping} operations, and not suspensions; we have to consider \emph{fiber sequences} in the homotopy category of $\M$.
	\item Thus, we have to build a proper class of generalized regular quotient $K \to K_\alpha$ for some object $K$. 
	\item We can't rely on the existence of maps $B_\alpha \to \mathbb{Z}/p\mathbb{Z}$ that are nonzero on $x_\alpha$ for each $\alpha$ (this is because $B_\alpha$ contains a cyclic direct summand of order $p$ generated by $x_\alpha$, so there will always be a homomorphism $B_\alpha\to B_\beta$ sending $x_\alpha$ to $x_\beta$). Fortunately, the cyclic group $\mathbb{Z}/p\mathbb{Z}$ plays no special r\^ole in the proof; we can safely assume that each $B_\alpha$ has a group homomorphism $B_\alpha \to \QZ$ that does not vanish on $x_\alpha$ (and these always exist, since $\QZ$ is an injective abelian group).
\end{itemize}
\end{remark}
\begin{definition}[Weak classifying object]\label{wcodef}
Let $\M$ be a model category, and $\K$ any category. A \emph{weak classifying object} for $\M$, relative to a functor\footnote{The symbol $\varpi$ is an alternative glyph for the Greek letter $\pi$.} $\varpi \colon \ho(\M) \to \K$ is a functor $k\colon \K \to \ho(\M)$ such that
\begin{itemize}
	\item the composition $\varpi\circ k$ is a full functor;
	\item there is a natural transformation $\epsilon\colon \varpi\circ k \Rightarrow 1$ which is an objectwise epimorphism (in this case $k$ is a \emph{right} \wco), \emph{or} a natural transformation $\eta \colon 1\Rightarrow \varpi\circ k$ which is an objectwise monomorphism (in this case $k$ is a \emph{left} \wco).
\end{itemize}
\end{definition}
\begin{notat}
We speak of a \emph{weak classifying object} $k$ (without specifying a side) when it is irrelevant whether $k$ is a left or right weak classifying object. All arguments can be easily adapted according to this slight abude of notation.

When a model category $\M$ has a \wco relative to the functor $\pi_n \colon \ho(\M) \to \cate{Grp}$ (see \refbf{ocio-coef}) we say that it has a \emph{\wco of type $n$} and it will be denoted $k(\firstblank,n) \colon \cate{Grp}\to \ho(\M)$. The notion of a \wco is an abstraction, tailored to our purposes, of Eilenberg-Mac Lane spaces $G\mapsto K(G,n)$ on $\Top$. Of course, if $n\ge 2$, we feel free to restrict the domain of $k(\firstblank,n)$ to the category of abelian groups.
\end{notat}
\begin{remark}
Having found a \wco of type $n_0$ for some $n_0\ge 2$ entails that there is a \wco also for every other $\pi_m$, with $m>n_0$ (so $\ell = m-n_0 > 0$): the functor
\[
k(\firstblank,n_0+\ell) := \Omega^\ell \circ k(\firstblank,n_0)
\]
is a \wco of type $n_0+\ell$.
\end{remark}
\begin{remark}
There are at least three cases where $\M$ has a \wco of type $n$: like before, we denote $\pi_n^A$ the $n^\text{th}$ homotopy group functor (see \refbf{ocio-coef}) for $n\ge 1$ and a coefficient object $A$.
\begin{enumerate}
	\item When $\pi_n^A\colon \ho(\M)\to \cate{Ab}$ has a section (\ie there is a functor $K_n$ such that $\pi_n^A\circ K_n \cong 1$);
	\item When $\pi_n^A$ has a faithful left adjoint;
	\item When $\pi_n^A$ has a full right adjoint.
\end{enumerate}
\end{remark}
\begin{theorem}\label{ginnunga}
Let $\M$ be a pointed model category; if there exist a natural number $n\ge 2$ and a \wco of type $n$ for $\M$, then $\ho(\M)$ can not be concrete.
\end{theorem}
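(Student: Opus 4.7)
The plan is to run a dual of Freyd's argument from \ref{honoconc}: instead of constructing a proper class of generalized regular subobjects from cofiber sequences of Moore spaces, I will construct a proper class of generalized regular quotients of a single object of $\ho(\M)$ from fiber sequences of weak classifying objects. Non-concreteness of $\ho(\M)$ will then follow from the co-Isbell condition of Proposition \ref{coisbell}.

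Fix a prime $p$ and let $(B_\alpha, x_\alpha)_{\alpha\in\cate{Ord}}$ be the class produced by the black-box Lemma \ref{spastic}. Because $\QZ$ is an injective abelian group, for every $\alpha$ there is a homomorphism $s_\alpha\colon B_\alpha \to \QZ$ with $s_\alpha(x_\alpha)\ne 0$. Setting $K := k(\QZ,n)$, $K_\alpha := k(B_\alpha,n)$ and $f_\alpha := k(s_\alpha,n)\colon K_\alpha \to K$, I take the homotopy fiber sequence of $f_\alpha$ in $\ho(\M)$ and extend it one step backwards, obtaining
\[
\Omega K_\alpha \xto{\Omega f_\alpha} \Omega K \xto{w_\alpha} F_\alpha \to K_\alpha \xto{f_\alpha} K.
\]
The claim is that $\{w_\alpha\colon \Omega K \to F_\alpha\}_{\alpha\in\cate{Ord}}$ represents a proper class of pairwise distinct generalized regular quotients of $\Omega K$.

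The crucial point, which mirrors Proposition \ref{key}, is the following: for all $\alpha < \beta$ the composite $w_\beta \circ \Omega f_\alpha$ is non-null in $\ho(\M)$. Suppose it were null; then exactness of the fiber sequence $\Omega K_\beta \xto{\Omega f_\beta} \Omega K \xto{w_\beta} F_\beta$ would produce a lift $\tilde g\colon \Omega K_\alpha \to \Omega K_\beta$ with $\Omega f_\beta \circ \tilde g = \Omega f_\alpha$. Applying $\pi_{n-1}$, which is still group-valued because $n\ge 2$, and using the shift isomorphism $\pi_{n-1}\circ\Omega \cong \pi_n$ of Remark \ref{ocio-coef}, I would obtain a commutative triangle $\pi_n(K_\alpha) \to \pi_n(K_\beta) \to \pi_n(K)$ composing to $\pi_n(f_\alpha)$. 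Fullness of $\pi_n\circ k$ would yield a group homomorphism $\psi\colon B_\alpha \to B_\beta$ whose image under $\pi_n k$ is the induced middle map, and naturality together with the objectwise epi/mono property of $\epsilon$ in the right-\wco case, respectively of $\eta$ in the left-\wco case, would then descend the triangle to one of abelian groups witnessing $s_\beta\circ\psi = s_\alpha$. In particular $\psi(x_\alpha)\ne 0$, which flatly contradicts the conclusion of Lemma \ref{spastic}.

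With the key lemma in place the conclusion is purely formal, exactly as at the end of the proof of \ref{honoconc}: if $w_\alpha \asymp w_\beta$ for some $\alpha<\beta$, then $w_\alpha$ coequalizes the parallel pair $(0,\Omega f_\alpha)\colon \Omega K_\alpha \rightrightarrows \Omega K$ since two consecutive arrows in a fiber sequence compose to zero, and so must $w_\beta$; this forces $w_\beta\circ\Omega f_\alpha = 0$ and contradicts the key lemma. I expect the subtle step to be precisely the descent inside the proof of the key lemma: what is directly available is a commutative triangle in $\ho(\M)$ between $\Omega$-objects, whereas the black box lives in $\cate{Ab}$, and bridging the two requires combining fullness of $\pi_n\circ k$ with the objectwise epi/mono comparison natural transformation, with the two \wco cases treated separately.
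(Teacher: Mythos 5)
Your proposal is correct and takes essentially the same route as the paper's own proof: you dualize Freyd's argument by feeding the black-box groups through the weak classifying object to get fiber sequences $\Omega K_\gamma \to \Omega K \to F_\gamma$, prove the key non-nullity lemma via exactness of the fiber sequence, fullness of $\pi_n\circ k$ and the objectwise epi/mono comparison transformation, and conclude through the co-Isbell condition applied to the generalized regular quotients $\Omega K \to F_\gamma$. The only (immaterial) divergence is the indexing in the key lemma: you show $w_\beta\circ\Omega f_\alpha\neq 0$ for $\alpha<\beta$, which yields a homomorphism $B_\alpha\to B_\beta$ contradicting Lemma \refbf{spastic} exactly as stated, whereas the paper proves $v_\alpha\circ\Omega u_\beta\neq 0$ and obtains a map $B_\beta\to B_\alpha$ killing $x_\beta$, i.e.\ it invokes the black box with the roles of $\alpha$ and $\beta$ transposed relative to its own statement.
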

This proof occupies the rest of the section. We establish the following notation:
\begin{itemize}
	\item we only prove the statement in case the functor $k$ is a \emph{right} \wco; with straightforward modifications the proof can be easily dualized to the case of a left \wco with $\eta\colon 1 \Rightarrow \pi_n\circ k(\firstblank,n)$;
	\item the object $K_\alpha$ is the image of the group $B_\alpha$ in Lemma \refbf{spastic} via the functor $k(\firstblank,n)$; we also denote $B=\QZ$ and $K = k(\QZ,n)$;
	\item we fix a map $t_\alpha \colon B_\alpha \to \QZ$ such that $t_\alpha(x_\alpha)\neq 0$ (there is always such a $t_\alpha$ since $\QZ$ is an injective abelian group); we denote $u_\alpha = k(t_\alpha,n) \colon K_\alpha \to K$.
\end{itemize}
Now, consider the fiber sequence
\[
\dots \to \Omega K_\alpha \to \Omega K \xto{v_\alpha} F_{\alpha} \to K_{\alpha} \xto{u_\alpha} K 
\]
We will use the co-Isbell condition \refbf{coisbell} to prove that since
\[
(\Omega K \xto{v_\alpha} F_{\alpha})_{\alpha\in\cate{Ord}}
\]
is a proper class of generalized regular quotient for $\Omega K$, the category can't be concrete. In order to do this, we re-enact Lemma \refbf{key} in the following form using the loop functor $\Omega$ instead of $\Sigma$:
\begin{lemma}\label{peterkey}
For each pair of ordinals $\alpha < \beta$ the composition
\[
\Omega K_\beta  \xto{\Omega u_\beta } \Omega K \xto{v_\alpha} F_{\alpha}
\]
is not null-homotopic.
\end{lemma}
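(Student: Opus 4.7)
The plan is to dualize the argument of Proposition \ref{key}, replacing cofiber sequences with fiber sequences and the absolute homology $H_{n+1}$ with the functor $\pi_n$ combined with the weak classifying object $k(\firstblank,n)$. I argue by contradiction: suppose $v_\alpha\circ \Omega u_\beta \simeq 0$ in $\ho(\M)$. By exactness of the fiber sequence $\Omega K_\alpha \xrightarrow{\Omega u_\alpha} \Omega K \xrightarrow{v_\alpha} F_\alpha$, this forces $\Omega u_\beta$ to factor through $\Omega u_\alpha$ via some morphism $g\colon \Omega K_\beta \to \Omega K_\alpha$ of $\ho(\M)$, so that $\Omega u_\beta = \Omega u_\alpha\circ g$.

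The next step is to translate this homotopical identity into one between group homomorphisms. Applying $\pi_{n-1}$ and using the shift $\pi_{n-1}\circ\Omega \cong \pi_n$ of Remark \ref{ocio-coef} produces a factorization $(u_\alpha)_\ast\circ g_\ast = (u_\beta)_\ast$ in $\cate{Grp}$. Since $K_\alpha = k(B_\alpha,n)$ and $K_\beta = k(B_\beta,n)$, fullness of $\pi_n\circ k$ guarantees that $g_\ast = (\pi_n\circ k)(h)$ for some bona fide group homomorphism $h\colon B_\beta \to B_\alpha$.

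To conclude, naturality of the objectwise epic counit $\epsilon\colon \pi_n\circ k\Rightarrow 1$, applied at $h$, $t_\alpha$ and $t_\beta$, assembles into the chain of identities $t_\beta\circ\epsilon_{B_\beta} = \epsilon_{\QZ}\circ (u_\beta)_\ast = \epsilon_{\QZ}\circ (u_\alpha)_\ast\circ g_\ast = t_\alpha\circ h\circ \epsilon_{B_\beta}$; right-cancelling the surjection $\epsilon_{B_\beta}$ yields $t_\alpha\circ h = t_\beta$. In particular $t_\alpha(h(x_\beta)) = t_\beta(x_\beta)\neq 0$, so $h(x_\beta)\neq 0$, contradicting the black box Lemma \ref{spastic}.

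The main obstacle I expect is precisely this middle algebraic step: the homotopical factorization $\Omega u_\beta = \Omega u_\alpha\circ g$ is by itself too weak to confront the black box lemma, and the useful equation $t_\alpha\circ h = t_\beta$ in $\cate{Grp}$ must be manufactured by combining the fullness of $\pi_n\circ k$ (to produce $h$ at all) with the epic naturality of $\epsilon$ (to control how $h$ interacts with the chosen $t$-maps). Stripping either half of the weak classifying object hypothesis would break the implication.
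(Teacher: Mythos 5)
Your proposal is correct and follows essentially the same route as the paper's proof: contradiction via the factorization through $\Omega K_\alpha \simeq \mathrm{fib}(v_\alpha)$, application of $\pi_{n-1}$ with the shift $\pi_{n-1}\circ\Omega\cong\pi_n$, fullness of $\pi_n\circ k$ to produce the group homomorphism, and the objectwise-epi naturality of $\epsilon$ to force $t_\alpha\circ h=t_\beta$, contradicting the black box lemma. You merely spell out as a chain of equations the diagram chase the paper presents pictorially.
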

\begin{proof}
We argue by contradiction: assume that the composition above is nullhomotopic; since $\Omega K_\alpha \simeq \text{fib}(v_\alpha)$, we get a map $\Omega K_\beta \to \Omega K_\alpha$ in $\ho(\M)$ that makes the triangle 
\[\label{the-diag-2}
\begin{kodi}
\obj{\Omega K_\beta & \Omega K & F_\alpha \\
& \Omega K_\alpha & \\};
\mor {Omega K_beta} {\Omega u_\beta}:-> {Omega K} {v_\alpha}:-> {F_alpha};
\mor[swap] * \varphi:-> {Omega K_alpha} {\Omega u_\alpha}:-> {Omega K};
\end{kodi}
\]
commute. But then the $\pi_{n-1}(\firstblank)$ of this commutative triangle embeds into the following bigger diagram:
\[
\begin{kodi}[remove characters=_\{\}, expand=full]
\foreach \i/\j in {0/,120/\alpha,-120/\beta}{
	\obj at (\i:1.4) {\pi_n k(B_{\j},n)};
	\obj at (\i:3.5) {B_{\j}};
	}
\mor {pi n kBalpha n} {\epsilon_\alpha}:-> {Balpha};
\mor {pi n kBbeta n} {\epsilon_\beta}:-> {Bbeta};
\mor {pi n kBn} {\epsilon}:-> {B};
\mor[swap] {pi n kBbeta n} {\pi_{n-1}(\varphi)}:{bend left},-> {pi n kBalpha n} {bend left},-> {pi n kBn};
\mor {pi n kBbeta n} {bend right},-> {pi n kBn};
\mor[swap] {Bbeta} \psi:{bend left, dashed},-> {Balpha} t_\alpha:{bend left},-> {B};
\mor * t_\beta:{bend right},-> {B};
\end{kodi}
\]
Every subdiagram made by solid arrows commutes, and the dotted arrow exists by the fullness assumption on $\pi_n\circ k(\firstblank,n)$. 

Since the $\epsilon$ arrows are all epimorphisms the outer triangle commutes. But this is impossible, since $\psi$ sends $x_\beta$ to $0$, whereas $t_\beta$ does not.
\end{proof}
\begin{proposition}
All the arrows $v_\alpha \colon \Omega K \to F_\alpha$ form \emph{distinct} generalized regular quotient of $\Omega K$, so that $\quot(\cate{Ho}_{\Omega K/})$ contains a proper class.
\end{proposition}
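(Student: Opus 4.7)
My plan is to directly dualize the end of Freyd's original argument, now using the generalized regular \emph{quotients} $\quot(\cate{Ho}_{\Omega K/})$ and the loop-space fiber sequence in place of the cofiber sequence of suspensions. The heart of the work has already been done in Lemma~\refbf{peterkey}; what remains is the purely formal step of converting ``non-nullity of a composition'' into ``distinctness of the associated generalized regular quotient''.

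First, I would proceed by contradiction: suppose that for some pair $\alpha < \beta$ we have $v_\alpha \asymp v_\beta$. Recalling from the discussion preceding Proposition~\refbf{coisbell} that $\asymp$ identifies two arrows with common domain iff they coequalize exactly the same parallel pairs, the plan is to exhibit a pair $f,g \colon \Omega K_\beta \rightrightarrows \Omega K$ which is coequalized by $v_\beta$ but not by $v_\alpha$. The natural candidate is $(0, \Omega u_\beta)$: since $\Omega K_\beta \xto{\Omega u_\beta} \Omega K \xto{v_\beta} F_\beta$ is a segment of the fiber sequence associated with $u_\beta$, the composition $v_\beta \circ \Omega u_\beta$ is null-homotopic. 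Hence $v_\beta$ coequalizes $(0, \Omega u_\beta)$, and by our $\asymp$-assumption $v_\alpha$ would coequalize the same pair, forcing $v_\alpha \circ \Omega u_\beta = 0$ in $\ho(\M)$.

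This last equality directly contradicts Lemma~\refbf{peterkey}, which guarantees that $v_\alpha \circ \Omega u_\beta$ is not null-homotopic whenever $\alpha < \beta$. I would therefore conclude that the $v_\alpha$, indexed by $\cate{Ord}$, determine pairwise inequivalent generalized regular quotients of $\Omega K$, so that $\quot(\cate{Ho}_{\Omega K/})$ is a proper class. Invoking the co-Isbell condition (Proposition~\refbf{coisbell}) then rules out concreteness of $\ho(\M)$ and completes the proof of Theorem~\refbf{ginnunga}. There is no real obstacle at this step: all the delicate content — the fullness of $\pi_n \circ k(\firstblank,n)$, the use of the black-box Lemma~\refbf{spastic}, and the passage through $\pi_{n-1}$ — was already absorbed into Lemma~\refbf{peterkey}, so this proposition reduces to a short unwinding of the definition of $\asymp$.
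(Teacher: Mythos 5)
Your argument is correct and coincides with the paper's own proof: both assume $v_\alpha \asymp v_\beta$ for $\alpha < \beta$, observe from the fiber sequence that $v_\beta$ coequalizes $(0,\Omega u_\beta)$, transfer this to $v_\alpha$ via the $\asymp$-relation, and derive the contradiction with Lemma~\refbf{peterkey}. The only (welcome) difference is that you make the parallel pair $(0,\Omega u_\beta)$ explicit, which the paper leaves implicit.
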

\begin{proof}
Suppose $v_\alpha \asymp v_\beta$ for $\alpha < \beta$. Since in the following diagram
\[
\begin{kodi}
\obj{
&&|(Fa)| F_\beta\\[-2em]
|(OmegaKa)|\Omega K_\beta &|(OmegaK)| \Omega K & \\[-2em]
&&|(Fb)| F_\alpha\\};
\mor OmegaKa {\Omega u_\beta}:-> OmegaK;
\mor OmegaK v_\beta:-> Fa;
\mor[swap] * v_\alpha:-> Fb;
\end{kodi}
\]
the composition of $v_\beta \circ \Omega u_{\beta}$ is null-homotopic and we assumed that $v_\alpha$ and $v_\beta$ equalize the same arrows, also the composition $ v_\alpha \circ \Omega u_\beta$ is null-homotopic. This contradicts lemma \refbf{peterkey}.
\end{proof}
\subsection{Quasistable model categories}
\begin{definition}[Quasistability]
A pointed model category is \emph{quasistable} if  the comonad $\Sigma\Omega$ of the adjunction $\Sigma\dashv \Omega$ in \refbf{sigmomega} is full.
\end{definition}
\begin{remark}
This definition is a weakening of the stability property for $\M$, as in the stable case the comonad $\Sigma\Omega$ is full (in fact, it is an equivalence).
\end{remark}
In a quasistable model category our main theorem takes the following form:
\begin{theorem}
\label{qsginnunga}
If $\M$ is quasistable, and it has a generalized \wco for \emph{some} functor $\varpi\colon \ho(\M) \to \cate{Grp}$ such that $\varpi * \varepsilon$ is an objectwise epimorphism, then it is not homotopy concrete.
\begin{proof}
The unstable proof can be adapted in the following way: consider the sequence of groups $B_\bullet$ obtained in \refbf{spastic}, regarded as valued in $\cate{Ab}\subset \cate{Grp}$, and the diagram
\[
\tiny
\begin{kodi}[remove characters=_\{\}, expand=full,xscale=.75,yscale=.75]
\foreach \i/\j in {0/,120/\alpha,-120/\beta}{
\obj at (\i:1.5) {\varpi\Sigma\Omega K_{\j}};
\obj at (\i:3.5) {\varpi K_{\j}};
\obj at (\i:5.5) {B_{\j}};
}
\mor {varpi Sigma Omega Kalpha} -> {varpi Kalpha} -> Balpha;
\mor {varpi Sigma Omega Kbeta} -> {varpi Kbeta} -> Bbeta;
\mor {varpi Sigma Omega K} -> {varpi K} -> B;
\mor :[bend left] {varpi Sigma Omega Kalpha} -> {varpi Sigma Omega K};
\mor :[bend right] * -> {varpi Sigma Omega Kbeta} -> *;
\mor :[bend right] {varpi Kbeta} -> {varpi K};
\mor :[bend left] * \star:dashed,-> {varpi Kalpha} -> *;
\mor :[bend right] {Bbeta} t_\beta:-> {B};
\mor[swap]:[bend left] * \star\star:dotted,-> {Balpha} t_\alpha:-> *;
\end{kodi}
\]
obtained using the same notation of the unstable proof. The starred arrows appear thanks to the assumption of quasistability, and the same argument shows that there can't be no nullhomotopic sequence $\Omega K_\beta \to \Omega K \to F_\alpha$ for $\alpha < \beta$.
\end{proof}
\end{theorem}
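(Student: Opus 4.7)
The plan is to imitate the scheme of \athm\refbf{ginnunga} almost verbatim, replacing the single step that specifically used a wco of type $n$ (the construction of a lift between the $K_\alpha$'s after assuming a null-homotopy) by a combination of (i) the fullness of the comonad $\Sigma\Omega$ granted by quasistability and (ii) the hypothesis that $\varpi * \varepsilon$ is an objectwise epimorphism. Concretely, I would set $K_\alpha := k(B_\alpha)$, $K := k(\QZ)$, fix maps $t_\alpha\colon B_\alpha\to\QZ$ not vanishing on $x_\alpha$, let $u_\alpha := k(t_\alpha)\colon K_\alpha\to K$, and form the fibre sequence
\[
\Omega K_\alpha\to\Omega K\xto{v_\alpha}F_\alpha\to K_\alpha\xto{u_\alpha}K.
\]
The goal would then be to show that the $v_\alpha$ constitute a proper class of pairwise inequivalent generalized regular quotients of $\Omega K$, violating the co-Isbell condition \refbf{coisbell}. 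As in the unstable proof, this reduces to an analogue of \refbf{peterkey}: for $\alpha<\beta$ the composite $v_\alpha\circ \Omega u_\beta$ must not be null-homotopic.

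To establish this key step I would argue by contradiction. Supposing the composite is null-homotopic, the fibre sequence yields $\varphi\colon \Omega K_\beta\to \Omega K_\alpha$ with $\Omega u_\alpha\circ\varphi=\Omega u_\beta$, and here is where quasistability enters: fullness of $\Sigma\Omega$ lets me find $\psi\colon K_\beta\to K_\alpha$ in $\ho(\M)$ with $\Sigma\Omega\psi=\Sigma\varphi$, whence $\Sigma\Omega(u_\alpha\circ\psi)=\Sigma\Omega u_\beta$. Next, applying $\varpi$ and using the naturality of $\varepsilon$ together with the hypothesis on $\varpi * \varepsilon$, I would cancel the epimorphism $\varpi\varepsilon_{K_\beta}$ to reach $\varpi u_\alpha\circ\varpi\psi=\varpi u_\beta$ in $\cate{Grp}$. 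At this point the fullness of $\varpi\circ k$ produces a homomorphism $\bar\psi\colon B_\beta\to B_\alpha$ lifting $\varpi\psi$, and the naturality of the wco counit $\epsilon\colon \varpi\circ k\Rightarrow 1$, together with $u_\alpha=k(t_\alpha)$, descends the identity to $(t_\alpha\circ\bar\psi)\circ\epsilon_{B_\beta}=t_\beta\circ\epsilon_{B_\beta}$; a final epi-cancellation then yields $t_\alpha\circ\bar\psi=t_\beta$, contradicting the black box lemma \refbf{spastic}.

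The rest of the argument is mechanical and parallels \athm\refbf{ginnunga}: assuming $v_\alpha\asymp v_\beta$ for $\alpha<\beta$, the fact that $v_\alpha$ and $v_\beta$ coequalize the same pairs together with $v_\beta\circ\Omega u_\beta\simeq 0$ (which holds by construction of the fibre sequence) forces $v_\alpha\circ\Omega u_\beta$ to be null-homotopic, contradicting the previous paragraph. The hardest part will be orchestrating the two-layer descent: quasistability provides $\psi$ only up to the equivalence imposed by $\Sigma\Omega$, and the chain of cancellations along two distinct epi natural transformations ($\varpi * \varepsilon$ for the adjunction counit, and $\epsilon$ for the wco counit) requires a careful appeal to naturality at each step. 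Each use of fullness produces a lift only up to one composition with a counit, so I will need to verify that the single fact about $\bar\psi$ that enters the contradiction — the value of $t_\alpha\circ\bar\psi$ on $x_\beta$ — is genuinely forced by the construction rather than a residual freedom in the choice of lift.
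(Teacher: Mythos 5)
Your proposal is correct and follows essentially the same route as the paper's (very terse) proof of \refbf{qsginnunga}: lift the null-homotopy to a map $\varphi$ via the fibre sequence, use fullness of $\Sigma\Omega$ to obtain $\psi\colon K_\beta\to K_\alpha$, cancel the objectwise epimorphism $\varpi\ast\varepsilon$, then use fullness of $\varpi\circ k$ and the epi counit of the \wco to descend to a homomorphism $B_\beta\to B_\alpha$ contradicting \refbf{spastic}, and conclude via the co-Isbell condition exactly as in \refbf{ginnunga}. In fact you make explicit the naturality and epi-cancellation steps that the paper only records in its three-layered diagram, and your closing worry about residual freedom in the lifts is moot, since the black box kills $x_\beta$ under \emph{any} homomorphism $B_\beta\to B_\alpha$, so any choice of lift yields the contradiction.
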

\section{Examples}
\begin{example}[Example 0]
Obviously, if two model categories are Quillen equivalent one is homotopy concrete if and only if the other is, and this requires no theorem whatsoever. So, as a consequence of \refbf{honoconc} every category Quillen equivalent to $\Top$ cannot be homotopy concrete. 
\end{example}
\begin{example}[Chain complexes]\label{complessi}
The homotopy category $\ho(\text{Ch}(\mathbb{Z}))$ of chain complexes of abelian groups with its standard model structure is not concrete. 

In fact homology functors $H_n$ have a \wco, that is the complex having a given abelian group $G$ in degree $n$ and zeroes elsewhere. Since this category is quasi stable (in fact, stable), the homotopy category cannot be concrete by \athm\refbf{qsginnunga}.
\end{example}
\begin{example}[Spectra]\label{spectra}
The category $\ho(\Omega\text{-}\cate{Sp}))$ obtained localizing the category of (Bousfield\hyp{}Friedlander) spectra is not concrete. Indeed, the stable homotopy functor $\pi^\text{s}_0 \colon \cate{Sp} \to \cate{Ab}$ has a \wco given by the Eilenberg-Mac Lane construction $A\mapsto K(A,\firstblank)$. Again, since this category is stable, the homotopy category cannot be concrete by \athm\refbf{qsginnunga}.
\end{example}
\begin{example}[Simplicial sheaves on a site]\label{fasci}
Let $(\C, J)$ be a small Grothendieck site; a model for hypercomplete $\infty$-stacks is the following:
\begin{itemize}
	\item Consider the category $[\C^\opp, \sSet]_\text{proj}$, endowed with the projective model structure with respect to the Kan-Quillen model structure on $\sSet$; this is called the \emph{global model structure}.
	\item now consider the left Bousfield localization given by the equivalences with respect to \emph{homotopy sheaves}, obtained as follows: consider the compositions
	\begin{gather*}
	[\C^\opp,\sSet] \xto{\pi_{0,*}} [\C^\opp, \Set] \xto{(\firstblank)^+} \cate{Sh}(\C,J) \\
	[(\C_{/X})^\opp,\sSet] \xto{\pi_{n,*}} [(\C_{/X})^\opp,\cate{Grp}] \xto{(\firstblank)^+} \cate{Sh}_\Delta(\C_{/X},J)
	\end{gather*}
	where the rightmost functor is $J$-sheafification. This defines functors $\underline{\pi}_n$ called the \emph{homotopy sheaves} of a simplicial presheaf $F$. A morphism $\eta\colon F\to G$ is a \emph{local equivalence} if it induces isomorphisms $\underline{\pi}_n(\eta) \colon \underline{\pi}_n(F) \overset{\cong}\to \underline{\pi}_n(G)$ between homotopy sheaves in each degree.%, and local equivalences form a Bousfield localization of the \emph{local model structure} described in \cite{jardine1987simplical,dugger2004hypercovers}.
\end{itemize}
We claim that the local model structure turns $\M=[\C^\opp, \sSet]$ into a category which is not homotopy concrete. To prove this, it suffices to consider the functor $\varpi_n \coloneqq \Gamma \circ \underline{\pi}_n\colon \M \to \Set_{*/}$ (depending on $n$ this functor will take values in abelian groups), giving the global sections of the homotopy sheaves. The construction of Eilenberg-Mac Lane stacks $K(\firstblank,n)$ of \cite[§\textbf{2.2}]{toen2010simplicial} gives \wco{}s of type $n$.
\end{example}

\section{A long and instructive example: $1\text{-}\cate{types}$ and $\Cat$}
As it is well known, the homotopy category of groupoids endowed with its `folk' model structure inherited from $\Cat$ is equivalent to the homotopy category of 1-types (spaces with vanishing $\pi_{\ge 2}$), via the \emph{classifying space} and \emph{fundamental groupoid} functors. This result certainly has an interest for both category theorists and homotopy theorists.% (we advise \cite{camarena2013whirlwind} as a modern and pleasant introductory reading on this topic).

The present section is completely devoted to prove that none of the three categories of 1-types, groupoids, and categories has a concrete localization at its natural choice of weak equivalences; to prove this we will heavily rely on the above-mentioned equivalence between groupoids and 1-types; this gives an elegant proof that the homotopy category of $\Cat$ is not concrete (a result that \cite[§4.1]{fconc} obtains with weeping and gnashing of teeth).

We remark that there can't be a model structure on the category $1\text{-}\cate{types}_*$ of pointed spaces with vanishing $\pi_{\ge 2}$, since the category is --not even finitely-- cocomplete. This might appear as an issue, as it shows that the assumptions of \athm\refbf{ginnunga} are not minimal: the presence of a mere class of weak equivalences $\W$ and a pair of homotopical functors $(\varpi,k)$, one of which nicely interacts with some `looping' functor $\Omega$ and the other is a \wco for the first is sufficient to build an object of the homotopy category with too many quotients. In fact, one could be tempted to state \athm\refbf{ginnunga} in the more general setting of \emph{categories of fibrant objects} ($1\text{-}\cate{types}_*$ is such a category).%, or in the even more general setting of what might be called `Puppe categories' where we are given $(\W,\varpi,k,\Omega)$ as above.

% We feel that such a weakening of assumptions does not yield substantial improvement in the discussion, as the proof of a statement like
% \begin{theorem*}
% Let $\{\M,(\W,\varpi,k,\Omega)\}$ be a pointed Puppe category; if there exist an index $n\ge 2$ and a \wco of type $n$ for $\M$, then $\M[\W^{-1}]$ can not be concrete.
% \end{theorem*}
% would go in the same way as the proof of \athm\refbf{ginnunga} (it is worth to notice that we already mentioned, right before \adef\refbf{sigmomega}, how the definition of homotopy groups with coefficient works also in a co/fibration category). 
A deeper discussion on this issue (\ie, what \emph{minimal} assumptions make our main theorem true) will certainly be the subject of further investigations.
\begin{example}[$1\text{-}\cate{types}_*$ is not homotopy concrete]\label{uno-tipi}
The category $1\text{-}\cate{types}_*$ has no concrete localization at its class of weak equivalences (induced by the inclusion $1\text{-}\cate{types}_*\subset\Top$): the fundamental group functor $\pi_1\colon 1\text{-}\cate{types}_* \to \cate{Grp}$ has the classifying space $K(\firstblank,1)$ as a \wco.
\end{example}
It is worth to outline the argument completely; as already mentioned, there's no model structure on $1\text{-}\cate{types}_*$ but its structure of category with fibrant objects is enough to conclude that it is not homotopy concrete, as the pair of functors $(\pi_1,K(\firstblank,1))$ still does what is needed: in the same notation of \athm\refbf{ginnunga}, the object $\Omega K$ is a 0-type (hence a fortiori a 1-type), and the maps $\Omega K \to F_\alpha$ still form a proper class of distinguished generalized quotients of $\Omega K$.

Now we would like to deduce, from the fact that the category of \emph{pointed} 1-types is not homotopy concrete, the fact that the category $1\text{-}\cate{types}$ of \emph{unpointed} 1-types is not concrete. This seemingly easy result requires instead quite an involved argument, as it is in general impossible to deduce the homotopy non-concreteness of $\M$ from the homotopy non-concreteness of the model category $\M_{*/}$ of pointed objects in $\M$: in this particular case, however, it's easy to see that the functor $1\text{-}\cate{types}_* \to 1\text{-}\cate{types}$ injects the proper class $\mathsf{Q}_*(\Omega K)$ of pointed generalized regular quotients into the class $\mathsf{Q}(\Omega K)$ of unpointed ones.
\begin{corollary}
The category of groupoids with the choice of its `folk' model structure, is not homotopy concrete.%, since \cite{camarena2013whirlwind} it is Quillen equivalent to $1\text{-}\cate{types}$.
\end{corollary}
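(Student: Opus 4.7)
The plan is to invoke the classical Quillen equivalence between $\Gpd_\text{folk}$ and $1\text{-}\cate{types}$, transport the non\hyp{}concreteness established in \refbf{uno-tipi} across this equivalence, and then apply the elementary observation (already recorded at the very beginning of the discussion on piercing model subcategories) that concreteness is invariant under equivalence of categories.

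More concretely, I would first recall that the nerve functor $N\colon\Gpd\to\sSet$ lands inside simplicial sets whose geometric realization has trivial $\pi_{\ge 2}$, and together with the fundamental groupoid functor $\Pi_1$ (the left adjoint to $N$) it induces a Quillen equivalence between $\Gpd_\text{folk}$ and (a model for) $1\text{-}\cate{types}$. In particular, there is an induced equivalence of homotopy categories
\[
\ho(\Gpd_\text{folk})\simeq\ho(1\text{-}\cate{types}).
\]
I would then quote \refbf{uno-tipi}, which combined with the bridging argument between pointed and unpointed 1-types sketched right before the corollary, shows that $\ho(1\text{-}\cate{types})$ admits a proper class of generalized regular quotients on some object, and so violates the Isbell condition \refbf{coisbell}.

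Finally, since concreteness is a property invariant under equivalence of categories (two equivalent categories are either both concrete or both non\hyp{}concrete, as noted in \refbf{all-is-conc}), the equivalence above transfers the failure of the co-Isbell condition from $\ho(1\text{-}\cate{types})$ to $\ho(\Gpd_\text{folk})$, yielding the statement. The only potentially subtle point is verifying that the Quillen equivalence between $\Gpd_\text{folk}$ and (pointed or unpointed) 1-types is actually a \emph{Quillen equivalence}, i.e.\ descends to an equivalence of homotopy categories rather than merely an adjunction; this is classical (see e.g.\ the nerve/realization story for 1-truncated spaces) and poses no real obstacle. Everything else is a direct transport along an equivalence.
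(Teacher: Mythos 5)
Your proposal matches the paper's own argument: the paper likewise invokes the classical nerve/fundamental-groupoid equivalence between $\ho(\Gpd_\text{folk})$ and $\ho(1\text{-}\cate{types})$, relies on the non-concreteness of $\ho(1\text{-}\cate{types})$ (established via Example \refbf{uno-tipi} and the pointed-to-unpointed bridging argument given just before the corollary), and transports across the equivalence using the invariance of concreteness recorded in \refbf{all-is-conc}. You have simply spelled out the Quillen equivalence that the paper cites as ``the above-mentioned equivalence.''
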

This is clear, in view of the above-mentioned equivalence between groupoids and (unpointed) 1-types.
\begin{corollary}
Since (cf. \refbf{gpd-in-cat}) $\Gpd_\text{folk}$ is an homotopy replete model subcategory of $\Cat_\text{folk}$, we conclude that $\Cat_\text{folk}$ can not be homotopy concrete.
\end{corollary}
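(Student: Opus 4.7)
The plan is to assemble three ingredients already in place, with no further homotopical work required. First, the previous corollary has just established that $\Gpd_\text{folk}$ is not homotopy concrete, so $\ho(\Gpd_\text{folk})$ admits no faithful functor to $\Set$. Second, example \refbf{gpd-in-cat} tells us that the inclusion $U \colon \Gpd_\text{folk} \hookrightarrow \Cat_\text{folk}$ is a piercing model subcategory: the inclusion is reflective (via groupoidification, which universally inverts all arrows) and coreflective (via the maximal-subgroupoid functor), and the weak equivalences, cofibrations, and fibrations of the folk structure on $\Gpd$ are precisely the morphisms whose images under $U$ lie in the corresponding classes of $\Cat_\text{folk}$.

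The plan is then to apply the proposition preceding remark \refbf{unconcrete-frombelow}, which upgrades any piercing inclusion to a faithful functor between homotopy categories; this yields a faithful $\ho(U) \colon \ho(\Gpd_\text{folk}) \hookrightarrow \ho(\Cat_\text{folk})$. At this point the conclusion is purely formal: were there a faithful $V \colon \ho(\Cat_\text{folk}) \to \Set$, the composite $V \circ \ho(U)$ would be a faithful functor $\ho(\Gpd_\text{folk}) \to \Set$, contradicting the preceding corollary. This is exactly the content of remark \refbf{unconcrete-frombelow} specialized to $(\W,\M)=(\Gpd_\text{folk},\Cat_\text{folk})$.

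There is no real obstacle. The only point one might want to double-check is that the folk structure on $\Gpd$ is indeed strongly created by $U$ in the sense of \cite{may2011more}; but this is packaged into example \refbf{gpd-in-cat} and is straightforward, since equivalences of categories between groupoids coincide with equivalences of groupoids, and the injective-on-objects condition transfers verbatim. Homotopy repleteness, while recorded in the statement, plays no further role in the argument: the piercing hypothesis alone suffices, via faithfulness at the level of homotopy categories, to propagate non-concreteness upwards from $\Gpd_\text{folk}$ to $\Cat_\text{folk}$.
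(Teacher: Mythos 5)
Your proposal is correct and follows exactly the paper's intended argument: combine the preceding corollary (that $\Gpd_\text{folk}$ is not homotopy concrete) with Example \refbf{gpd-in-cat} and the proposition on piercing model subcategories, i.e.\ Remark \refbf{unconcrete-frombelow}, to propagate non-concreteness from $\ho(\Gpd_\text{folk})$ to $\ho(\Cat_\text{folk})$ via the faithful $\ho(U)$. Your side observation that homotopy repleteness is not actually used is consistent with the paper, whose Remark \refbf{unconcrete-frombelow} invokes only the piercing hypothesis even though the corollary's statement mentions repleteness.
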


\subsubsection*{Acknowledgements}
The authors would like to thank professor Dan Christensen for a preliminary and attentive reading of the first draft of this paper, professor Jiří Rosický for  his support in our investigation, professor Ivo Dell'Ambrogio for persuading us about the relevance of our result to a public of algebraic topologists, and more in general everybody who contributed to the improvement of this work.

\hrulefill

\bibliography{allofthem}{}
\bibliographystyle{amsalpha}
\end{document}